\theoremstyle{plain}
\newtheorem{theorem}{Theorem}
\newtheorem{lemma}[theorem]{Lemma}
\newtheorem{proposition}[theorem]{Proposition}
\newtheorem{corollary}[theorem]{Corollary}
\theoremstyle{definition}
\theoremstyle{remark}
\newtheorem{remark}[theorem]{Remark}
\theoremstyle{plain}
\newtheorem*{theorem*}{Theorem}
\newtheorem*{lemma*}{Lemma}
\newtheorem*{proposition*}{Proposition}
\newtheorem*{statement*}{Statement}
\newtheorem*{corollary*}{Corollary}
\theoremstyle{definition}
\newtheorem*{definition*}{Definition}
\theoremstyle{remark}
\newtheorem*{notation*}{Notation}
\newtheorem*{remark*}{Remark}
\newtheorem*{example*}{Example}
\begin{document}
\title[Schr\"{o}dinger operators with singular matrix potentials]{Remarks on Schr\"{o}dinger operators with singular
matrix potentials}

\author[V. Mikhailets]{Vladimir Mikhailets}

\address{Institute of Mathematics \\
         National Academy of Science of Ukraine \\
         3 Tereshchenkivs'ka Str. \\
         01601 Kyiv-4 \\
         Ukraine}

\email{mikhailets@imath.kiev.ua}

\author[V. Molyboga]{Volodymyr Molyboga}

\address{Institute of Mathematics \\
         National Academy of Science of Ukraine \\
         3 Tereshchenkivs'ka Str. \\
         01601 Kyiv-4 \\
         Ukraine}

\email{molyboga@imath.kiev.ua}

\keywords{Matrix Schr\"{o}dinger operator, Glazman--Povzner--Wienholtz theorem, $m$-accretivity, complex-valued 
potential, distributional potential}

\subjclass[2010]{Primary 34L40; Secondary 47B44, 47A05}

\begin{abstract}
In this paper the asymmetric generalization of the Glazman--Povzner--Wienholtz theorem is proved for one-dimensional 
Schr\"{o}dinger operators with strongly singular matrix potentials from the space $H_{loc}^{-1}(\mathbb{R}, 
\mathbb{C}^{m\times m})$. This result is new in the scalar case as well. 
\end{abstract}

\maketitle

\section{Introduction and main results}
Let us consider in the complex separable Hilbert space of vector functions $L^{2}(\mathbb{R}, \mathbb{C}^{m})$, $m\in 
\mathbb{N}$ the operators generated by the formal differential expression: 
\begin{equation}\label{eq_10}
 \mathrm{l}[u]:=-u''+qu,\qquad u=(u_{1},\ldots,u_{m}).
\end{equation} 
where the matrix potential $q=\{q_{ij}\}_{i,j=1}^{m}$ belongs to the Sobolev negative class $H_{loc}^{-1}(\mathbb{R}, 
\mathbb{C}^{m\times m})$. 
Without loss of generality, we assume that the potential $q$ in \eqref{eq_10} may be presented in the form
\begin{equation*}\label{eq_12}
 q=Q'+s,\quad Q\in L_{loc}^{2}(\mathbb{R}, \mathbb{C}^{m\times m}),\;
 s\in L_{loc}^{1}(\mathbb{R}, \mathbb{C}^{m\times m}),
\end{equation*} 
where the derivative is understood in the sense of the distributions. Then the block Shin--Zettl matrices are defined: 
\begin{equation}\label{eq_14}
  A(x):=
  \begin{pmatrix}
     Q & I_{m} \\
    -Q^{2}+s & -Q  \
  \end{pmatrix}
   \in L_{loc}^{1}\left(\mathbb{R},\mathbb{C}^{2m\times 2m})\right),
\end{equation}
where $I_{m}$ is a unit $(m\times m)$-matrix. Similarly to the scalar case \cite{SvSh2003, GrMi2010} Shin--Zettl matrices 
define quasiderivatives \cite{MiSa2011}:
\begin{align*}\label{eq_16}
  u^{[0]}:=u,\qquad  u^{[1]}:=u'-Qu, \qquad
  u^{[2]}:=\left(u^{[1]}\right)'+Qu^{[1]}+\left(Q^{2}-s\right)u.
\end{align*}
Then formal differential equation \eqref{eq_10} is a quasidifferential one:
\begin{equation*}\label{eq_18}
  \mathrm{l}[u]:=-u^{[2]},\quad 
  \mathrm{Dom}(\mathrm{l}):=\left\{u\left|\,u,\,u^{[1]}\in 
  \mathrm{AC}_{loc}(\mathbb{R},\mathbb{C}^{m})\right.\right\},
\end{equation*}
where by $\mathrm{AC}_{loc}(\mathbb{R},\mathbb{C}^{m})$ we denote the class of locally absolutely continuous vector 
functions. This definition is motivated by the fact that
\begin{equation*}
 -u^{[2]}=-u''+qu
\end{equation*}
in the sense of distributions, i. e.,
\begin{equation*}\label{eq_20}
  \langle -u^{[2]},\varphi\rangle=\langle-u''+qu,\varphi\rangle,\qquad u\in \mathrm{Dom}(\mathrm{l}),\;
  \varphi\in \mathrm{C}_{0}^{\infty}(\mathbb{R},\mathbb{C}^{m}).
\end{equation*}
We say that function $u$ solves the Cauchy problem 
\begin{align}
 \mathrm{l}[u] & =f,\qquad f\in L_{loc}^{1}(\mathbb{R},\mathbb{C}^{m}), \label{eq_22.1} \\
 u(x_{0}) & =c_{0},\; u^{[1]}(x_{0})=c_{1},\qquad x_{0}\in \mathbb{R},\; c_{0},c_{1}\in \mathbb{C}^{m}, \label{eq_22.2}
\end{align}
if $u$ is the first coordinate of the vector function solving the Cauchy problem for the associated Cauchy problem with 
initial conditions \eqref{eq_22.2} 
\begin{equation}\label{eq_24}
\frac{d}{dx}
\begin{pmatrix}
 u \\
 u^{[1]}
\end{pmatrix}
=A(x)
\begin{pmatrix}
 u \\
 u^{[1]}
\end{pmatrix}
+
\begin{pmatrix}
 0 \\
 -f
\end{pmatrix}
.
\end{equation}

The existence and uniqueness theorem implies that the Cauchy problem for system \eqref{eq_24} has a unique solution (see 
\cite[Theorem~16.1]{Nai1969} and \cite[Theorem~2.1]{Wei1987}). Therefore our definition of a solution of the equation 
\eqref{eq_22.1} is correct.

Differential expression \eqref{eq_10} gives rise to the associated maximal and preminimal operators 
$\mathrm{L}$ and $\mathrm{L}_{00}$ in the Hilbert space $L^{2}(\mathbb{R}, \mathbb{C}^{m})$:
\begin{align*}
  \mathrm{L}u:=\mathrm{l}[u],\quad
   \mathrm{Dom}(\mathrm{L}):=\left\{u\in L^{2}(\mathbb{R},\mathbb{C}^{m})\,\left|\,u,\,u^{[1]}\in 
\mathrm{AC}_{loc}(\mathbb{R},\mathbb{C}^{m}),\;\mathrm{l}[u]\in L^{2}(\mathbb{R},\mathbb{C}^{m})\right.\right\},
\end{align*}
and 
\begin{align*}
  \mathrm{L}_{00}u:=\mathrm{l}[u], \qquad
   \mathrm{Dom}(\mathrm{L}_{00}):=\left\{u\in \mathrm{Dom}(\mathrm{L})\,
   \left|\,\mathrm{supp}\,u\Subset\mathbb{R}\right.\right\}. \hspace{80pt}
\end{align*}

The block Shin--Zettl matrix \eqref{eq_14} defines the Lagrange adjoint quasidifferential expression $\mathrm{l}^{+}$ 
in the following way:
\begin{align*}\label{eq_30}
  v^{\{0\}} & :=v,\qquad  v^{\{1\}}:=v'-Q^{\ast}v, \qquad
  v^{\{2\}}:=\left(v^{\{1\}}\right)'+Q^{\ast}v^{\{1\}}+\left((Q^{\ast})^{2}-s^{\ast}\right)v, \\
  \mathrm{l^{+}}[v] & :=-v^{\{2\}},\quad 
  \mathrm{Dom}(\mathrm{l}^{+}):=\left\{v\left|\,v,\,v^{\{1\}}\in 
  \mathrm{AC}_{loc}(\mathbb{R},\mathbb{C}^{m})\right.\right\},
\end{align*}
where the matrix $Q^{\ast}:=\overline{Q}^{T}$ is Hermitian conjugate to $Q$. The matrix $s^{\ast}$ has a similar meaning.

Quasidifferential expression $\mathrm{l}^{+}$ gives rise to the associated maximal and preminimal operators 
$\mathrm{L}^{+}$ and $\mathrm{L}_{00}^{+}$:
\begin{align*}
  \mathrm{L}^{+} & v:=\mathrm{l}^{+}[v], \\
   \mathrm{Dom}(\mathrm{L}^{+}) & :=\left\{v\in L^{2}(\mathbb{R},\mathbb{C}^{m})\,\left|\,v,\,v^{\{1\}}\in 
\mathrm{AC}_{loc}(\mathbb{R},\mathbb{C}^{m}),\;\mathrm{l}^{+}[v]\in L^{2}(\mathbb{R},\mathbb{C}^{m})\right.\right\},
\end{align*}
and 
\begin{align*}
  \mathrm{L}_{00}^{+}v:=\mathrm{l}^{+}[v], \qquad
   \mathrm{Dom}(\mathrm{L}_{00}^{+}):=\left\{v\in \mathrm{Dom}(\mathrm{L}^{+})\,
   \left|\,\mathrm{supp}\,v\Subset\mathbb{R}\right.\right\}. \hspace{80pt}
\end{align*}
Below we prove (Proposition~\ref{pr_10}) that preminimal operators $\mathrm{L}_{00}$, $\mathrm{L}_{00}^{+}$ 
are densely defined in the space $L^{2}(\mathbb{R},\mathbb{C}^{m})$ and have closures $\mathrm{L}_{0}$ and 
$\mathrm{L}_{0}^{+}$ which are called minimal operators. Maximal operators $\mathrm{L}$ and $\mathrm{L}^{+}$ are closed.

For the case of potential $q$ being a real-valued symmetric matrix such operators were considered earlier in 
\cite{MiSa2011}. Matrix Schr\"{o}dinger operators with strongly singular self-adjoint potentials of Miura class 
were investigated in detail in \cite{EcGsNcTs2013}. There one may find a more detailed review and a more extensive 
bibliography. For the scalar case of quasidifferential operators generated by Shin--Zettl matrices of general form 
one may find a review of results in \cite{EvMr1999}, see also \cite{Ztt1975, GrMiPn2013}.

Recall that an operator $A$ in the Hilbert space $H$ is called \textit{accretive} if 
\begin{equation*}
  \mathrm{Re}\,\langle Au,u\rangle_{H}\geq 0, \qquad u\in \mathrm{Dom}(A).
\end{equation*}
If in addition the left half-plane $\{\lambda\in \mathbb{C}\;|\;\mathrm{Re}\,\lambda<0\}$ 
belongs to the resolvent set of the operator $A$ then operator $A$ is called $m$-\textit{accretive} 
\cite{Kt1995, Schm2012}. This operator is also \textit{maximal accretive} in the sense that it has no accretive extensions in 
the space $H$. If operator $A$ is $m$-accretive then operator $-A$ generates a semigroup of contractions in the space $H$. 
Converse is also true. 

The main result of this paper is the non-symmetric generalization of the Glazman--Povzner--Wienholtz theorem for operators 
generated by differential expression \eqref{eq_10}.
\begin{theorem}\label{th_MnA}
The operator $\mathrm{L}_{0}$ is $m$-accretive if and only if preminimal operators $\mathrm{L}_{00}$ and 
$\mathrm{L}_{00}^{+}$ are accretive. In this case $\mathrm{L}_{0}=\mathrm{L}$.
\end{theorem}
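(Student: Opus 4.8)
The plan is to prove the two directions separately, with the forward direction ($\mathrm{L}_0$ $m$-accretive $\Rightarrow$ both preminimal operators accretive) being essentially trivial and the converse being the substantive content. For the trivial direction: if $\mathrm{L}_0$ is $m$-accretive it is in particular accretive, hence its restriction $\mathrm{L}_{00}$ is accretive; and since $\mathrm{L}_0^{+}$ is the adjoint of $\mathrm{L}_0$ (this should be established via the Lagrange identity / Green's formula for the quasiderivatives, presumably in the section following this statement), $m$-accretivity of $\mathrm{L}_0$ forces $m$-accretivity of $\mathrm{L}_0^{+}$, whence $\mathrm{L}_{00}^{+}$ is accretive.

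For the converse, assume $\mathrm{L}_{00}$ and $\mathrm{L}_{00}^{+}$ are accretive. Taking closures, $\mathrm{L}_0$ and $\mathrm{L}_0^{+}$ are accretive. The key identity is $\mathrm{L}_0^{\ast} = \mathrm{L}^{+}$ and $(\mathrm{L}_0^{+})^{\ast} = \mathrm{L}$ (the maximal operator is the adjoint of the minimal one, via integration by parts on compactly supported functions). By the standard criterion, an accretive operator $A$ is $m$-accretive iff $\operatorname{ran}(A+\lambda)$ is dense for some (hence all) $\lambda$ with $\operatorname{Re}\lambda>0$; equivalently, iff $A^{\ast}+\bar\lambda$ is injective, i.e. $\ker(\mathrm{L}^{+}+\bar\lambda)=\{0\}$. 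So I must show: \emph{if} $\mathrm{L}_{00}^{+}$ is accretive, \emph{then} the equation $\mathrm{l}^{+}[v] = -\bar\lambda v$ has no nonzero $L^2(\mathbb{R},\mathbb{C}^m)$ solution for $\operatorname{Re}\lambda>0$. This is exactly the Glazman--Povzner--Wienholtz mechanism: the accretivity of $\mathrm{L}_{00}^{+}$ is a one-sided analogue of semiboundedness, and the argument runs by taking such an $L^2$ solution $v$, cutting it off with functions $\varphi_n$ supported in $[-2n,2n]$, equal to $1$ on $[-n,n]$, and plugging $\varphi_n v$ into the accretivity inequality $\operatorname{Re}\langle \mathrm{l}^{+}[\varphi_n v], \varphi_n v\rangle \ge 0$. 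Expanding the quasiderivatives, the eigenvalue equation kills the main term, the cutoff errors involve $\varphi_n'$ and $\varphi_n''$ times $v$ and $v^{\{1\}}$ on the annulus $n\le |x|\le 2n$, and—using the Cauchy--Schwarz inequality together with an a priori bound on $\int_{n\le|x|\le 2n}|v^{\{1\}}|^2$ in terms of $\int_{n\le|x|\le 2n}|v|^2$ (obtained by integrating the equation itself against $v$ over the annulus)—one forces $\operatorname{Re}\lambda \int_{\mathbb{R}}|v|^2 \le 0$, hence $v\equiv 0$.

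Granting that, $\mathrm{L}_0$ is $m$-accretive; symmetrically, using accretivity of $\mathrm{L}_{00}$ one gets $\ker(\mathrm{L}+\bar\lambda)=\{0\}$, so $\mathrm{L}_0^{+}$ is also $m$-accretive. Then $\mathrm{L}_0 = (\mathrm{L}_0^{+})^{\ast}$; but an $m$-accretive operator has no proper accretive extensions, and $(\mathrm{L}_0^{+})^{\ast}=\mathrm{L}$ is an accretive extension of the $m$-accretive operator $\mathrm{L}_0$, forcing $\mathrm{L}_0=\mathrm{L}$. (Alternatively, $\mathrm{L}_0$ $m$-accretive gives $\operatorname{ran}(\mathrm{L}_0+1)=L^2$, and $\ker(\mathrm{L}+1)=\{0\}$ from the other accretivity; these two facts immediately give $\mathrm{L}_0=\mathrm{L}$.)

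The main obstacle is the a priori estimate controlling $v^{\{1\}}$ on the annulus: because the potential is only in $H^{-1}_{loc}$, the naive integration-by-parts used in the classical GPW proof must be redone with the quasiderivatives, and one must check that the cross terms $\langle Q^{\ast}(\varphi_n v), \cdot\rangle$ and the $L^1_{loc}$ part $s^{\ast}$ do not spoil the estimate—i.e. that all the "bad" terms are genuinely supported on the annulus and are dominated by $\bigl(\int_{n\le|x|\le2n}|v|^2\bigr)^{1/2}$ times a bounded factor, so that they vanish as $n\to\infty$ since $v\in L^2$. Handling the quasiderivative cutoff commutators cleanly is where the real work lies; everything else is soft functional analysis (closures, adjoints, the $m$-accretivity criterion, and the no-proper-accretive-extension property).
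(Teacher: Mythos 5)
Your overall architecture coincides with the paper's: reduce $m$-accretivity of $\mathrm{L}_{0}$ to the triviality of $\ker(\mathrm{L}_{0}^{\ast})=\ker(\mathrm{L}^{+})$ (after a harmless shift of the potential one may take $\lambda=0$ rather than carrying $\operatorname{Re}\lambda>0$ through), kill the kernel by a cutoff argument, run the symmetric argument for $\mathrm{L}_{0}^{+}$, and conclude $\mathrm{L}_{0}=\mathrm{L}$ from the fact that an $m$-accretive operator admits no proper accretive extension. The forward direction and the endgame are fine. However, there is a genuine gap exactly at the point you yourself flag as ``where the real work lies'': you propose to control the cutoff error terms by Cauchy--Schwarz together with an a priori bound of $\int_{n\le|x|\le 2n}|v^{\{1\}}|^{2}$ (or $|v'|^{2}$) by $\int_{n\le|x|\le 2n}|v|^{2}$, ``obtained by integrating the equation itself against $v$ over the annulus.'' That estimate is not established, and the sketched derivation does not work here: integrating against $v$ over an annulus produces uncontrolled boundary terms $[v,v]$ at the endpoints, and the cross terms with $Q^{\ast}\in L^{2}_{loc}$ and $s^{\ast}\in L^{1}_{loc}$ do not close into a bound of the required form --- this failure of naive energy estimates is precisely the difficulty of the $H^{-1}_{loc}$ class. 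So the central step of your proof rests on an unproved and dubious claim.

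The paper's proof shows that no such estimate is needed. The commutator identity
$\mathrm{l}^{+}[\varphi I_{m}v]=\varphi I_{m}\mathrm{l}^{+}[v]-\varphi''I_{m}v-2\varphi'I_{m}v'$
(the potential and the quasiderivative $v^{\{1\}}$ drop out entirely) gives, for $\mathrm{l}^{+}[v]=0$ and real $\varphi$, after one integration by parts,
\begin{equation*}
 \left\langle \mathrm{l}^{+}[\varphi I_{m}v],\varphi I_{m}v\right\rangle
 =\int_{\mathbb{R}}(\varphi')^{2}(v,v)_{\mathbb{C}^{m}}\,dx
 +\int_{\mathbb{R}}\varphi\varphi'\left((v,v')_{\mathbb{C}^{m}}-(v',v)_{\mathbb{C}^{m}}\right)dx ,
\end{equation*}
and the second integrand is purely imaginary, so it vanishes upon taking real parts. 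The accretivity inequality then yields directly $\int\varphi^{2}|v|^{2}\le\int(\varphi')^{2}|v|^{2}$, and the cutoffs $\varphi_{n}$ with $|\varphi_{n}'|\le C$ force $v\equiv0$ since $v\in L^{2}$. In short: the term you plan to estimate has zero real part and cancels identically; replacing that cancellation by an a priori derivative bound is not a workable alternative for distributional potentials, so as written your argument does not go through.
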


Note that in this theorem we assume both preminimal operators $\mathrm{L}_{00}$ and $\mathrm{L}_{00}^{+}$ 
to be accretive. In the scalar case one of these operators being accretive implies that other is also accretive. 

\begin{corollary}[Cf. \cite{ClGs2003}]\label{cr_MnThA_10}
If matrix potential $q$ is self-adjoint: $Q=Q^{\ast}$ and $s=s^{\ast}$, then operator $\mathrm{L}_{0}$ is symmetric.
Moreover if operator $\mathrm{L}_{0}$ is bounded below then it is self-adjoint and $\mathrm{L}_{0}=\mathrm{L}$.
\end{corollary}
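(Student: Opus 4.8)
The plan is to read off both assertions from Theorem~\ref{th_MnA}, the key observation being that the hypothesis $Q=Q^{\ast}$, $s=s^{\ast}$ forces the Lagrange adjoint expression $\mathrm{l}^{+}$ to coincide with $\mathrm{l}$: under it the quasiderivatives $v^{\{k\}}$ reduce literally to the $u^{[k]}$, whence $\mathrm{l}^{+}=\mathrm{l}$ and therefore $\mathrm{L}_{00}^{+}=\mathrm{L}_{00}$, $\mathrm{L}^{+}=\mathrm{L}$, $\mathrm{L}_{0}^{+}=\mathrm{L}_{0}$. Applying the Lagrange identity relating $\mathrm{l}$ and $\mathrm{l}^{+}$ to functions $u,v\in\mathrm{Dom}(\mathrm{L}_{00})$ and using that they have compact support, all boundary terms vanish and one gets $\langle\mathrm{L}_{00}u,v\rangle=\langle u,\mathrm{L}_{00}v\rangle$. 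Thus $\mathrm{L}_{00}$ is a densely defined (Proposition~\ref{pr_10}) symmetric operator, and its closure $\mathrm{L}_{0}=\overline{\mathrm{L}_{00}}$ is symmetric as well; this is the first assertion.

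For the second assertion, suppose $\mathrm{L}_{0}$ is bounded below, say $\langle\mathrm{L}_{0}u,u\rangle\geq -c\|u\|^{2}$ for all $u\in\mathrm{Dom}(\mathrm{L}_{0})$ and some $c\in\mathbb{R}$ (the form is real-valued by the symmetry just proved). The idea is to apply Theorem~\ref{th_MnA} to the shifted expression $\mathrm{l}+c$, i.e. to the potential $q+cI_{m}$. Since $cI_{m}\in L^{1}_{loc}$, this merely adds $cI_{m}$ to the summand $s$; it leaves $u^{[1]}$ and the domains of all the associated operators unchanged and replaces $\mathrm{l}[u]$ by $\mathrm{l}[u]+cu$, so the minimal and maximal operators of $\mathrm{l}+c$ are exactly $\mathrm{L}_{0}+cI$ and $\mathrm{L}+cI$. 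Moreover $q+cI_{m}$ is again self-adjoint, so the two preminimal operators of $\mathrm{l}+c$ both coincide with $\mathrm{L}_{00}+cI$, and for $u\in\mathrm{Dom}(\mathrm{L}_{00})\subseteq\mathrm{Dom}(\mathrm{L}_{0})$ one has
\begin{equation*}
 \langle(\mathrm{L}_{00}+cI)u,u\rangle=\langle\mathrm{L}_{0}u,u\rangle+c\|u\|^{2}\geq 0 .
\end{equation*}
Hence these preminimal operators are accretive, and Theorem~\ref{th_MnA} yields that $\mathrm{L}_{0}+cI$ is $m$-accretive and $\mathrm{L}_{0}+cI=\mathrm{L}+cI$, i.e. $\mathrm{L}_{0}=\mathrm{L}$.

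Finally I would upgrade \textquotedblleft symmetric $+$ $m$-accretive\textquotedblright\ to \textquotedblleft self-adjoint\textquotedblright: an $m$-accretive operator has the whole open left half-plane, in particular a real point, in its resolvent set, and a closed symmetric operator with a real point of the resolvent set is self-adjoint. Hence $\mathrm{L}_{0}+cI$, and so $\mathrm{L}_{0}$, is self-adjoint. The only step that is not entirely routine is the reduction to Theorem~\ref{th_MnA}: one has to check that the shift by $cI_{m}$ stays inside the admissible class of potentials, does not change the operator domains, and preserves self-adjointness of the potential --- once this is in place, accretivity of the preminimal operators is immediate from the lower bound and the theorem does the rest.
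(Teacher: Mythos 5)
Your proposal is correct and takes essentially the same route as the paper: under $Q=Q^{\ast}$, $s=s^{\ast}$ the expressions $\mathrm{l}$ and $\mathrm{l}^{+}$ coincide, so $\mathrm{L}_{00}=\mathrm{L}_{00}^{+}$ is symmetric (the paper gets this from property $2^{0}$ of Proposition~\ref{pr_10} rather than by re-running the Lagrange identity, but that is the same mechanism), and the bounded-below case is reduced to Theorem~\ref{th_MnA} by the constant shift $q\mapsto q+cI_{m}$. The paper's own proof is a single sentence that leaves the shift argument and the final upgrade to self-adjointness implicit; your write-up merely makes those steps explicit and verifies them correctly.
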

For $m=1$ this is known \cite[Remark~III.2]{AlKsMl2010}, see also \cite{MiMl6, HrMk2012, EcGsNcTs2012}.

\begin{remark}\label{rm_MnTh10}
If the complex matrices $Q$ and $s$ are symmetric, i. e., $Q=Q^{T}$, $s=s^{T}$, 
then Theorem~\ref{th_MnA} can be strengthened. 
As operator $\mathrm{L}_{00}$ is accretive, the operator $\mathrm{L}_{0}$ is maximal accretive 
and its residual spectrum is empty.
\end{remark}

In particular, this condition is satisfied in the scalar case, when $m=1$. 
In this case, the operators $\mathrm{L}_{00}$ and $\mathrm{L}_{00}^{+}$ obviously are accretive 
if the real part of the potential $q$ is positive in the sense of distributions.
This condition is equivalent to 
\begin{equation*}
 q=\mu+i\nu,
\end{equation*}
where  $\mu$ is a nonnegative Radon measure on a locally compact space $\mathbb{R}$ 
and $\nu$ is a real-valued distribution from $H_{loc}^{-1}(\mathbb{R}, \mathbb{C}^{m\times m})$.

The paper is organized as follows. 
In Section~\ref{sec_Prp} we state a list of the symbols used in the paper and thoroughly investigate the properties 
of the operators $\mathrm{L}$, $\mathrm{L}_{0}$ и $\mathrm{L}^{+}$, $\mathrm{L}_{0}^{+}$ (Proposition~\ref{pr_10}). 
Section~\ref{sec_Prf} contains proofs of the main Theorem~\ref{th_MnA}, 
Corollary~\ref{cr_MnThA_10} and Remark~\ref{rm_MnTh10}.


\section{Properties of the minimal and maximal operators}\label{sec_Prp}
In this paper, we use the following notation. 
We denote by $\left(\cdot\,,\cdot\right)_{\mathbb{C}^{m}}$ the inner product in the space $\mathbb{C}^{m}$:
\begin{equation*}
 \left(u,v\right)_{\mathbb{C}^{m}}:=\sum_{i=1}^{m}u_{i}\overline{v_{i}},\qquad u=(u_{1},\ldots,u_{m}),\; 
v=(v_{1},\ldots,v_{m})\in 
\mathbb{C}^{m}.
\end{equation*}
We denote by $\left\langle\cdot\,,\cdot\right\rangle_{L^{2}(\mathbb{R}, \mathbb{C}^{m})}$ 
the inner product in the Hilbert space of square-integrable vector functions $L^{2}(\mathbb{R}, \mathbb{C}^{m})$:
\begin{equation*}
 \left\langle u,v\right\rangle_{L^{2}(\mathbb{R}, \mathbb{C}^{m})}:=\int_{\mathbb{R}}(u,v)_{\mathbb{C}^{m}}d\,x
\end{equation*}
For an arbitrary matrix $A=\{a_{ij}\}_{i,j=1}^{m}\in \mathbb{C}^{m\times m}$ 
we denote the transposed matrix by $A^{T}=\{a_{ij}^{T}\}_{i,j=1}^{m}$ 
and Hermitian conjugate matrix by $A^{\ast}=\{a_{ij}^{\ast}\}_{i,j=1}^{m}$: 
$a_{ij}^{\ast}=\overline{a_{ji}}$. 
For an arbitrary complex number $a\in \mathbb{C}$ we denote the corresponding complex conjugate number by 
$\overline{a}$. 

We say that matrix function $A(x)=\{a_{ij}(x)\}_{i,j=1}^{m}$ belongs to the space 
$L_{loc}^{p}(\mathbb{R}, \mathbb{C}^{m\times m})$, 
if each element of this matrix $a_{ij}(x)$ belongs to the space 
$L_{loc}^{p}(\mathbb{R}, \mathbb{C})$, $p\in [1,\infty)$. 

J. Weidmann \cite{Wei1987} previously studied in detail the quasidifferential matrix-valued Sturm-Liouville operators 
generated by quasidifferential expressions $\tau$, 
\begin{align*}\label{eq_34}
 \tau[u] & := -(u'-Qu)'-Q^{\ast}(u'-Qu)-(Q^{\ast}Q-s)u, \\
 Q & \in L_{loc}^{2}(\mathbb{R},\mathbb{C}^{m\times m}),\;s\in L_{loc}^{1}(\mathbb{R},\mathbb{C}^{m\times m}),\; s=s^{\ast}.
\end{align*}
In this case preminimal operators generated by quasidifferential expressions $\tau$ are symmetric
\cite[Theorem~3.1]{Wei1987}.

Obviously, if matrices $Q=Q^{\ast}$ and $s=s^{\ast}$ are self-adjoint 
then operators generated by quasidifferential expressions $\tau$ 
and operators generated by quasidifferential expressions $\mathrm{l}$ and $\mathrm{l}^{+}$ coincide.

The following properties of the operators $\mathrm{L}$, $\mathrm{L}_{0}$, $\mathrm{L}_{00}$ и $\mathrm{L}^{+}$, 
$\mathrm{L}_{0}^{+}$, $\mathrm{L}_{00}^{+}$ we state without proof, 
because they are proved in the same way as the properties of operators generated by quasidifferential expressions~$\tau$
\cite{Wei1987}.

\begin{lemma}\label{lm_Prp10}
For arbitrary vector functions $u\in \mathrm{Dom}(\mathrm{L})$, $v\in \mathrm{Dom}(\mathrm{L}^{+})$ 
and arbitrary finite interval $[a,b]$ we have
\begin{equation*}\label{eq_36}
 \int_{a}^{b}\left(l[u],v\right)_{\mathbb{C}^{m}}d\,x-\int_{a}^{b}\left(u,\mathrm{l}^{+}[v]\right)_{\mathbb{C}^{m}}d\,x
 =[u,v]_{ a } ^ { b }, 
\end{equation*} 
where 
\begin{align*}
  [u,v](t)\equiv [u,v] & :=\left(u,v^{\{1\}}\right)_{\mathbb{C}^{m}}-\left(u^{[1]},v\right)_{\mathbb{C}^{m}}, \\
  [u,v]_{a}^{b} & :=[u,v](b)-[u,v](a),\quad -\infty\leq a\leq b\leq \infty.
\end{align*}  
\end{lemma}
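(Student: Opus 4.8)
The plan is to reduce the integral identity to a pointwise one. First I would verify that the scalar function $x\mapsto[u,v](x)$ is locally absolutely continuous on $\mathbb{R}$, and then establish the differential identity
\begin{equation*}
 \frac{d}{dx}[u,v] = \left(\mathrm{l}[u],v\right)_{\mathbb{C}^{m}} - \left(u,\mathrm{l}^{+}[v]\right)_{\mathbb{C}^{m}}
\end{equation*}
for almost every $x\in\mathbb{R}$; integrating over $[a,b]$ and applying the fundamental theorem of calculus for locally absolutely continuous functions then yields the assertion. The local absolute continuity is immediate: for $u\in\mathrm{Dom}(\mathrm{L})$, $v\in\mathrm{Dom}(\mathrm{L}^{+})$ the functions $u,u^{[1]},v,v^{\{1\}}$ all lie in $\mathrm{AC}_{loc}(\mathbb{R},\mathbb{C}^{m})$, and $[u,v]=(u,v^{\{1\}})_{\mathbb{C}^{m}}-(u^{[1]},v)_{\mathbb{C}^{m}}$ is a finite combination of scalar products of such functions, hence is in $\mathrm{AC}_{loc}(\mathbb{R},\mathbb{C})$.

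For the differential identity I would apply the Leibniz rule (valid almost everywhere) to get
\begin{equation*}
 \frac{d}{dx}[u,v] = \left(u',v^{\{1\}}\right)_{\mathbb{C}^{m}} + \left(u,\left(v^{\{1\}}\right)'\right)_{\mathbb{C}^{m}} - \left(\left(u^{[1]}\right)',v\right)_{\mathbb{C}^{m}} - \left(u^{[1]},v'\right)_{\mathbb{C}^{m}},
\end{equation*}
where every summand on the right is in $L^{1}_{loc}(\mathbb{R})$ since $u',v',\left(u^{[1]}\right)',\left(v^{\{1\}}\right)'\in L^{1}_{loc}$ while the complementary factors are locally bounded or continuous. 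Next I would eliminate the top-order derivatives by the definitions of the quasiderivatives: $\mathrm{l}[u]=-u^{[2]}$ gives $\left(u^{[1]}\right)'=-\mathrm{l}[u]-Qu^{[1]}-(Q^{2}-s)u$, and analogously $\left(v^{\{1\}}\right)'=-\mathrm{l}^{+}[v]-Q^{\ast}v^{\{1\}}-\left((Q^{\ast})^{2}-s^{\ast}\right)v$. After substitution, $\frac{d}{dx}[u,v]$ becomes $(\mathrm{l}[u],v)_{\mathbb{C}^{m}}-(u,\mathrm{l}^{+}[v])_{\mathbb{C}^{m}}$ plus a collection of correction terms carrying the matrices $Q$, $Q^{2}$, $s$.

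To see that the correction terms cancel, I would use the pointwise identity $(Aw_{1},w_{2})_{\mathbb{C}^{m}}=(w_{1},A^{\ast}w_{2})_{\mathbb{C}^{m}}$ for $A\in\mathbb{C}^{m\times m}$, together with $(Q^{\ast})^{\ast}=Q$ and $(s^{\ast})^{\ast}=s$. This rewrites $(u,Q^{\ast}v^{\{1\}})_{\mathbb{C}^{m}}$ as $(Qu,v^{\{1\}})_{\mathbb{C}^{m}}$, rewrites $(u,((Q^{\ast})^{2}-s^{\ast})v)_{\mathbb{C}^{m}}$ as $((Q^{2}-s)u,v)_{\mathbb{C}^{m}}$ so that the $(Q^{2}-s)$-terms cancel against those coming from $\left(u^{[1]}\right)'$, and rewrites $(Qu^{[1]},v)_{\mathbb{C}^{m}}$ as $(u^{[1]},Q^{\ast}v)_{\mathbb{C}^{m}}$; what is left regroups as $(u'-Qu,v^{\{1\}})_{\mathbb{C}^{m}}=(u^{[1]},v^{\{1\}})_{\mathbb{C}^{m}}$ and $-(u^{[1]},v'-Q^{\ast}v)_{\mathbb{C}^{m}}=-(u^{[1]},v^{\{1\}})_{\mathbb{C}^{m}}$, which annihilate each other. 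Hence the differential identity holds, and integration completes the proof.

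The computation is routine; the only delicate point is the bookkeeping in the cancellation step, where one must keep track of which of $Q$, $Q^{\ast}$, $Q^{2}$, $s$, $s^{\ast}$ is transferred across the inner product and check that the contributions originating from $\mathrm{l}$ and from $\mathrm{l}^{+}$ are exactly opposite. With a view to later use, I would also note that if moreover $u,\mathrm{l}[u],v,\mathrm{l}^{+}[v]\in L^{2}(\mathbb{R},\mathbb{C}^{m})$, then the right-hand side is integrable over all of $\mathbb{R}$ by the Cauchy--Schwarz inequality, so the one-sided limits $[u,v](\pm\infty)$ exist and the identity extends to $-\infty\le a\le b\le\infty$.
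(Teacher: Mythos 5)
Your proof is correct, and it is the standard argument: the paper itself states this lemma without proof, deferring to Weidmann's treatment of the quasidifferential expressions $\tau$, and that treatment proceeds exactly as you do — local absolute continuity of $[u,v]$, the a.e.\ product rule, elimination of $\bigl(u^{[1]}\bigr)'$ and $\bigl(v^{\{1\}}\bigr)'$ via the quasiderivative definitions, cancellation of the $Q$-, $Q^{2}$- and $s$-terms by moving matrices across the $\mathbb{C}^{m}$ inner product, and integration. Your closing remark about integrability of the right-hand side when $u,\mathrm{l}[u],v,\mathrm{l}^{+}[v]\in L^{2}$ is also the correct route to Lemmas~\ref{lm_Prp12} and~\ref{lm_Prp14}.
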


\begin{lemma}\label{lm_Prp12}
For arbitrary vector functions $u\in \mathrm{Dom}(\mathrm{L})$ and $v\in \mathrm{Dom}(\mathrm{L}^{+})$ 
the following limits exist and are finite:
\begin{equation*}\label{eq_40}
  [u,v](-\infty):=\lim_{t\rightarrow-\infty}[u,v](t),\qquad [u,v](\infty):=\lim_{t\rightarrow\infty}[u,v](t).
\end{equation*}
\end{lemma}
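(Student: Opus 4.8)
The plan is to follow the classical limit-point/limit-circle style argument for Sturm-Liouville operators, adapted to the non-symmetric quasidifferential setting. The existence of the limits $[u,v](\pm\infty)$ rests on Lemma~\ref{lm_Prp10}: since $u\in\mathrm{Dom}(\mathrm{L})$ means $\mathrm{l}[u]\in L^{2}(\mathbb{R},\mathbb{C}^{m})$ and $v\in\mathrm{Dom}(\mathrm{L}^{+})$ means $\mathrm{l}^{+}[v]\in L^{2}(\mathbb{R},\mathbb{C}^{m})$, the integrand $\left(\mathrm{l}[u],v\right)_{\mathbb{C}^{m}}-\left(u,\mathrm{l}^{+}[v]\right)_{\mathbb{C}^{m}}$ is integrable over $\mathbb{R}$ by the Cauchy-Schwarz inequality (each of $u,v,\mathrm{l}[u],\mathrm{l}^{+}[v]$ lies in $L^{2}$). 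Hence the Lagrange identity says that $[u,v](b)-[u,v](a)$ equals an integral that converges absolutely as $a\to-\infty$, $b\to\infty$.

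First I would fix an arbitrary sequence $b_{n}\to\infty$ and observe that $[u,v](b_{n})=[u,v](0)+\int_{0}^{b_{n}}\bigl(\left(\mathrm{l}[u],v\right)_{\mathbb{C}^{m}}-\left(u,\mathrm{l}^{+}[v]\right)_{\mathbb{C}^{m}}\bigr)\,dx$; the right-hand side converges as $n\to\infty$ to $[u,v](0)+\int_{0}^{\infty}(\cdots)\,dx$, a finite number, because the integrand is in $L^{1}(\mathbb{R})$. Since this holds for every sequence tending to $+\infty$ and the limiting value does not depend on the sequence, the limit $[u,v](\infty):=\lim_{t\to\infty}[u,v](t)$ exists and is finite. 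The argument at $-\infty$ is verbatim the same with $\int_{b}^{0}$ in place of $\int_{0}^{b}$, using absolute integrability near $-\infty$.

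There is essentially no serious obstacle here; the only point that needs a word is that Lemma~\ref{lm_Prp10} is stated for finite intervals $[a,b]$, so one applies it on $[0,t]$ and lets $t$ run to the endpoint, rather than invoking it directly at $\pm\infty$. One should also note in passing that $[u,v](t)$ is a continuous (indeed locally absolutely continuous) function of $t$ because $u,u^{[1]},v,v^{\{1\}}\in\mathrm{AC}_{loc}(\mathbb{R},\mathbb{C}^{m})$, so the existence of the limit along sequences upgrades to existence of the genuine limit $t\to\pm\infty$. This completes the proof.
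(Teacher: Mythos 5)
Your proof is correct and is exactly the standard argument: the paper states this lemma without proof, referring to Weidmann's treatment of the symmetric case, and the intended reasoning is precisely what you give --- the finite-interval Lagrange identity of Lemma~\ref{lm_Prp10} plus the fact that $\left(\mathrm{l}[u],v\right)_{\mathbb{C}^{m}}-\left(u,\mathrm{l}^{+}[v]\right)_{\mathbb{C}^{m}}\in L^{1}(\mathbb{R})$ by Cauchy--Schwarz, so that $[u,v](t)$ differs from $[u,v](0)$ by a convergent integral. Your remarks about continuity of $t\mapsto[u,v](t)$ and passing from sequences to the genuine limit are fine, if slightly more than is needed.
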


\begin{lemma}[Generalized Lagrange identity]\label{lm_Prp14}
For arbitrary vector functions $u\in \mathrm{Dom}(\mathrm{L})$ and $v\in \mathrm{Dom}(\mathrm{L}^{+})$ 
the following relation holds:
\begin{equation*}\label{eq_42}
 \int_{-\infty}^{\infty}\left(l[u],v\right)_{\mathbb{C}^{m}}d\,x 
-\int_{-\infty}^{\infty}\left(l[u],v\right)_{\mathbb{C}^{m}}d\,x=[u,v]_{ -\infty}^{\infty}.
\end{equation*}
\end{lemma}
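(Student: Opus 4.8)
The plan is to obtain the identity as a passage to the limit in the finite-interval Lagrange formula of Lemma~\ref{lm_Prp10}; here the second integrand in the statement is to be read as $\left(u,\mathrm{l}^{+}[v]\right)_{\mathbb{C}^{m}}$, in agreement with that lemma. First I would observe that the two improper integrals over $\mathbb{R}$ converge absolutely. Since $u\in\mathrm{Dom}(\mathrm{L})$ we have $u,\mathrm{l}[u]\in L^{2}(\mathbb{R},\mathbb{C}^{m})$, and since $v\in\mathrm{Dom}(\mathrm{L}^{+})$ we have $v,\mathrm{l}^{+}[v]\in L^{2}(\mathbb{R},\mathbb{C}^{m})$; by the Cauchy--Schwarz inequality the pointwise pairings $\left(\mathrm{l}[u],v\right)_{\mathbb{C}^{m}}$ and $\left(u,\mathrm{l}^{+}[v]\right)_{\mathbb{C}^{m}}$ therefore belong to $L^{1}(\mathbb{R})$. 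In particular, as $a\to-\infty$ and $b\to+\infty$ the truncated integrals $\int_{a}^{b}\left(\mathrm{l}[u],v\right)_{\mathbb{C}^{m}}\,dx$ and $\int_{a}^{b}\left(u,\mathrm{l}^{+}[v]\right)_{\mathbb{C}^{m}}\,dx$ converge to the corresponding integrals over $\mathbb{R}$.

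Next I would apply Lemma~\ref{lm_Prp10} on an arbitrary finite interval $[a,b]$,
\[
 \int_{a}^{b}\left(\mathrm{l}[u],v\right)_{\mathbb{C}^{m}}\,dx-\int_{a}^{b}\left(u,\mathrm{l}^{+}[v]\right)_{\mathbb{C}^{m}}\,dx=[u,v](b)-[u,v](a),
\]
and then let $a\to-\infty$, $b\to+\infty$. The left-hand side tends to $\int_{-\infty}^{\infty}\left(\mathrm{l}[u],v\right)_{\mathbb{C}^{m}}\,dx-\int_{-\infty}^{\infty}\left(u,\mathrm{l}^{+}[v]\right)_{\mathbb{C}^{m}}\,dx$ by the previous paragraph, while the right-hand side tends to $[u,v](\infty)-[u,v](-\infty)$ by Lemma~\ref{lm_Prp12}, which asserts precisely that these two one-sided limits exist and are finite. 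By the definition $[u,v]_{-\infty}^{\infty}:=[u,v](\infty)-[u,v](-\infty)$ recorded in Lemma~\ref{lm_Prp10}, this is exactly the claimed relation.

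I do not expect a genuine obstacle: the statement is essentially a bookkeeping consequence of Lemmas~\ref{lm_Prp10} and \ref{lm_Prp12}. The only subtlety worth flagging is that it would not suffice to know merely that the difference $[u,v](b)-[u,v](a)$ converges; one needs the \emph{separate} convergence of each boundary term, which is the extra content of Lemma~\ref{lm_Prp12}. Everything else is a routine limit, justified by the $L^{1}$-integrability of the two pairings established in the first step.
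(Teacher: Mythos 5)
Your proof is correct and is exactly the argument the paper intends: the paper states this lemma without proof, deferring to Weidmann's treatment, which proceeds precisely as you do --- pass to the limit in the finite-interval identity of Lemma~\ref{lm_Prp10}, using Lemma~\ref{lm_Prp12} for the separate convergence of the boundary terms and Cauchy--Schwarz (from $u,\mathrm{l}[u],v,\mathrm{l}^{+}[v]\in L^{2}$) for the integrals. You also rightly flag that the displayed formula contains a typo and that the second integrand must be read as $\left(u,\mathrm{l}^{+}[v]\right)_{\mathbb{C}^{m}}$, in agreement with Lemma~\ref{lm_Prp10}.
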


\begin{proposition}\label{pr_10}
The operators $\mathrm{L}$, $\mathrm{L}_{00}$ and $\mathrm{L}^{+}$, $\mathrm{L}_{00}^{+}$ have the following properties:
\begin{itemize}
  \item [$1^{0}$.] Operators $\mathrm{L}_{00}$ and $\mathrm{L}_{00}^{+}$ are densely defined in the Hilbert space  
$L^{2}(\mathbb{R},\mathbb{C}^{m})$.
  \item [$2^{0}$.] The equalities 
\begin{equation*}
  \left(\mathrm{L}_{00}\right)^{\ast}=\mathrm{L}^{+},\qquad \left(\mathrm{L}_{00}^{+}\right)^{\ast}=\mathrm{L}
\end{equation*}
hold. 
In particular, operators $\mathrm{L}$, $\mathrm{L}^{+}$ are closed and operators $\mathrm{L}_{00}$, $\mathrm{L}_{00}^{+}$ are closable.
  \item [$3^{0}$.] Domains of operators $\mathrm{L}_{0}$, $\mathrm{L}_{0}^{+}$ may be described in the following way:
\begin{align*}
  \mathrm{Dom}(\mathrm{L}_{0}) & =\left\{u\in \mathrm{Dom}(\mathrm{L}) \left|\,[u,v]_{-\infty}^{\infty}=0\quad \forall
v\in  \mathrm{Dom}(\mathrm{L}^{+})\right.\right\}, \\
  \mathrm{Dom}(\mathrm{L}_{0}^{+}) & =\left\{v\in \mathrm{Dom}(\mathrm{L}^{+}) \left|\,[u,v]_{-\infty}^{\infty}=0\quad
\forall u\in  \mathrm{Dom}(\mathrm{L})\right.\right\}.
\end{align*}
  \item [$4^{0}$.] The following inclusions take place:
\begin{equation*}
  \mathrm{Dom}(\mathrm{L})\subset H_{loc}^{1}(\mathbb{R},\mathbb{C}^{m}),\;
  \mathrm{Dom}(\mathrm{L}^{+})\subset H_{loc}^{1}(\mathbb{R},\mathbb{C}^{m}).
\end{equation*}
\end{itemize}
\end{proposition}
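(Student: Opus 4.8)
The plan is to follow Weidmann's treatment of symmetric quasidifferential operators \cite{Wei1987}, adapting it to the matrix and non-symmetric setting; the only non-formal ingredient is a local solvability analysis on bounded intervals combined with the Lagrange identity of Lemma~\ref{lm_Prp10}. I would dispose of $4^{0}$ first: for $u\in\mathrm{Dom}(\mathrm{L})$ write $u'=u^{[1]}+Qu$, where $u^{[1]}\in\mathrm{AC}_{loc}\subset L_{loc}^{2}$ and, $u$ being continuous hence locally bounded and $Q\in L_{loc}^{2}$, also $Qu\in L_{loc}^{2}$; thus $u'\in L_{loc}^{2}$ and $u\in H_{loc}^{1}$, and the same computation handles $\mathrm{L}^{+}$.

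The heart of the matter is a finite-interval lemma. Fix a bounded interval $(a,b)$; for $h\in L^{2}((a,b),\mathbb{C}^{m})$ let $v_{h}$ be the unique solution of $\mathrm{l}[v]=h$ on $(a,b)$ with $v(a)=v^{[1]}(a)=0$ (provided by the existence--uniqueness theorem for system~\eqref{eq_24}), and put $Th:=(v_{h}(b),v_{h}^{[1]}(b))\in\mathbb{C}^{2m}$. By uniqueness, extension by zero identifies $\{v_{h}:h\in\ker T\}$ with the restriction to $(a,b)$ of $\{u\in\mathrm{Dom}(\mathrm{L}_{00}):\mathrm{supp}\,u\subset[a,b]\}$, so $\mathrm{l}$ maps the latter onto $\ker T$, a subspace of $L^{2}((a,b),\mathbb{C}^{m})$ of codimension $\le 2m$. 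I would then prove that $(\ker T)^{\perp}$ is exactly the $2m$-dimensional space of solutions of $\mathrm{l}^{+}[\psi]=0$ on $(a,b)$: for $h\in\ker T$ and $\psi$ with $\psi,\psi^{\{1\}}\in\mathrm{AC}[a,b]$, Lemma~\ref{lm_Prp10} gives
\begin{equation*}
 \int_{a}^{b}(h,\psi)_{\mathbb{C}^{m}}\,d\,x-\int_{a}^{b}\bigl(v_{h},\mathrm{l}^{+}[\psi]\bigr)_{\mathbb{C}^{m}}\,d\,x=[v_{h},\psi]_{a}^{b}=0,
\end{equation*}
since $v_{h},v_{h}^{[1]}$ vanish at $a,b$; taking $\psi$ homogeneous shows these $\psi$ (which restrict to $2m$ linearly independent functions on $(a,b)$) lie in $(\ker T)^{\perp}$, and since $\dim(\ker T)^{\perp}\le 2m$ this is all of it. I expect this lemma, and its $\mathrm{l}^{+}$-analogue, to be the main obstacle; the rest is formal.

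Granting it, $1^{0}$ follows: if $g\in L^{2}(\mathbb{R},\mathbb{C}^{m})$ is orthogonal to $\mathrm{Dom}(\mathrm{L}_{00})$, then for each $(a,b)$ the displayed identity with $\psi$ any solution of $\mathrm{l}^{+}[\psi]=g|_{(a,b)}$ (and $\langle v_{h},g\rangle_{L^{2}(a,b)}=0$) shows every such $\psi$ lies in $(\ker T)^{\perp}$; but the solution set of $\mathrm{l}^{+}[\psi]=g|_{(a,b)}$ is a coset of the $2m$-dimensional homogeneous solution space, hence spans a subspace of dimension $2m$ or $2m+1$ according as $g|_{(a,b)}=0$ or not, while $\dim(\ker T)^{\perp}\le 2m$, so $g|_{(a,b)}=0$; as $(a,b)$ is arbitrary, $g=0$, and likewise $\mathrm{Dom}(\mathrm{L}_{00}^{+})$ is dense. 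For $2^{0}$, the inclusion $\mathrm{L}^{+}\subset(\mathrm{L}_{00})^{*}$ is Lemma~\ref{lm_Prp10} again (the boundary term vanishes as $u$ has compact support). Conversely, for $v\in\mathrm{Dom}((\mathrm{L}_{00})^{*})$ with $w:=(\mathrm{L}_{00})^{*}v$, pick on each $(a,b)$ a solution $\psi_{0}$ of $\mathrm{l}^{+}[\psi_{0}]=w$; Lemma~\ref{lm_Prp10} gives $\langle\mathrm{l}[u],v-\psi_{0}\rangle_{L^{2}(a,b)}=0$ for every $u\in\mathrm{Dom}(\mathrm{L}_{00})$ supported in $[a,b]$, so $v-\psi_{0}\in(\ker T)^{\perp}$ is a homogeneous adjoint solution, whence $v\in\mathrm{Dom}(\mathrm{l}^{+})$ on $(a,b)$ with $\mathrm{l}^{+}[v]=w$; as $(a,b)$ is arbitrary and $w\in L^{2}(\mathbb{R},\mathbb{C}^{m})$, $v\in\mathrm{Dom}(\mathrm{L}^{+})$ and $\mathrm{L}^{+}v=w$. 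Thus $(\mathrm{L}_{00})^{*}=\mathrm{L}^{+}$ and, symmetrically, $(\mathrm{L}_{00}^{+})^{*}=\mathrm{L}$; consequently $\mathrm{L},\mathrm{L}^{+}$ are closed (adjoints of densely defined operators), $\mathrm{L}_{00},\mathrm{L}_{00}^{+}$ are closable (their adjoints densely defined), and $\mathrm{L}_{0}=(\mathrm{L}_{00})^{**}=(\mathrm{L}^{+})^{*}$, $\mathrm{L}_{0}^{+}=(\mathrm{L})^{*}$.

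Finally, $3^{0}$ follows from $\mathrm{L}_{0}=(\mathrm{L}^{+})^{*}$, the inclusion $\mathrm{L}_{0}\subset\mathrm{L}$ (the latter a closed extension of $\mathrm{L}_{00}$), and Lemma~\ref{lm_Prp14} (its boundary limits existing by Lemma~\ref{lm_Prp12}). If $u\in\mathrm{Dom}(\mathrm{L}_{0})$ then $\langle\mathrm{l}[u],v\rangle=\langle\mathrm{L}_{0}u,v\rangle=\langle u,\mathrm{L}^{+}v\rangle$ for all $v\in\mathrm{Dom}(\mathrm{L}^{+})$, so Lemma~\ref{lm_Prp14} forces $[u,v]_{-\infty}^{\infty}=0$. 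Conversely, if $u\in\mathrm{Dom}(\mathrm{L})$ satisfies $[u,v]_{-\infty}^{\infty}=0$ for all $v\in\mathrm{Dom}(\mathrm{L}^{+})$, then $v\mapsto\langle\mathrm{L}^{+}v,u\rangle=\langle v,\mathrm{l}[u]\rangle$ is bounded, so $u\in\mathrm{Dom}((\mathrm{L}^{+})^{*})=\mathrm{Dom}(\mathrm{L}_{0})$; interchanging $\mathrm{l}\leftrightarrow\mathrm{l}^{+}$ gives the description of $\mathrm{Dom}(\mathrm{L}_{0}^{+})$.
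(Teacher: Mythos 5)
Your argument is correct and is exactly the route the paper intends: the paper states Proposition~\ref{pr_10} without proof, remarking that it "is proved in the same way as" Weidmann's treatment of the symmetric quasidifferential case \cite{Wei1987}, and your finite-interval construction (the map $T$, the identification of $(\ker T)^{\perp}$ with the homogeneous adjoint solutions via the Lagrange identity, then density, the adjoint computation, and the boundary-form description of $\mathrm{Dom}(\mathrm{L}_{0})$) is precisely that argument adapted to the non-symmetric matrix setting. No gaps worth flagging.
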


For the case $m=1$ the results of this section are established in \cite{MiMl2013}.



\section{Proofs}\label{sec_Prf}
The following lemma is proved by direct calculation.
\begin{lemma}\label{lm_Prf10}
For arbitrary vector functions $u\in \mathrm{Dom}(\mathrm{L})$, $v\in \mathrm{Dom}(\mathrm{L}^{+})$ 
and functions  $\varphi\in C_{0}^{\infty}(\mathbb{R},\mathbb{C})$ we have 
\begin{align*}
 i)\; & \mathrm{l}[\varphi I_{m}u]=\varphi I_{m}\mathrm{l}[u]-\varphi''I_{m}u-2\varphi'I_{m}u',\quad 
 \varphi I_{m}u\in \mathrm{Dom}(\mathrm{L}_{00}); \\
 ii)\; & \mathrm{l}^{+}[\varphi I_{m}v]=\varphi I_{m}\mathrm{l}^{+}[v]-\varphi''I_{m}v-2\varphi'I_{m}v',\quad 
 \varphi I_{m}v\in \mathrm{Dom}(\mathrm{L}_{00}^{+}). \hspace{95pt}
\end{align*}
\end{lemma}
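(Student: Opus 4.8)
The plan is to verify both identities by substituting $\varphi I_{m}u$ into the definition of the quasiderivatives $u^{[1]}=u'-Qu$, $u^{[2]}=(u^{[1]})'+Qu^{[1]}+(Q^{2}-s)u$ and collecting terms. Since $\varphi$ is scalar-valued, $\varphi I_{m}$ commutes with the matrix coefficients $Q$, $Q^{2}-s$ and their Hermitian conjugates, so the matrix structure is inessential and the computation is the same as in the scalar case \cite{MiMl2013}; one only has to be careful about which function each factor multiplies.

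First I would compute the first quasiderivative. By the Leibniz rule for a $C^{\infty}$ function times an $\mathrm{AC}_{loc}$ function,
\[
 (\varphi I_{m}u)^{[1]}=(\varphi u)'-Q(\varphi u)=\varphi' I_{m}u+\varphi(u'-Qu)=\varphi' I_{m}u+\varphi I_{m}u^{[1]},
\]
which again belongs to $\mathrm{AC}_{loc}(\mathbb{R},\mathbb{C}^{m})$ because $u,u^{[1]}\in\mathrm{AC}_{loc}$ and $\varphi\in C_{0}^{\infty}$. Differentiating once more and inserting the result into the formula for the second quasiderivative, one obtains after grouping the terms carrying $\varphi$ into $\varphi I_{m}u^{[2]}$ that
\[
 (\varphi I_{m}u)^{[2]}=\varphi I_{m}u^{[2]}+\varphi''I_{m}u+\varphi' I_{m}u'+\varphi'\bigl(u^{[1]}+Qu\bigr).
\]
The last bracket equals $u'$, so $(\varphi I_{m}u)^{[2]}=\varphi I_{m}u^{[2]}+\varphi''I_{m}u+2\varphi' I_{m}u'$, and negating both sides gives $i)$. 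Identity $ii)$ follows by the identical computation with $Q$, $s$, $u^{[k]}$ replaced by $Q^{\ast}$, $s^{\ast}$, $v^{\{k\}}$.

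It remains to justify the membership $\varphi I_{m}u\in\mathrm{Dom}(\mathrm{L}_{00})$. From the displays above, $\varphi I_{m}u$ and $(\varphi I_{m}u)^{[1]}$ lie in $\mathrm{AC}_{loc}(\mathbb{R},\mathbb{C}^{m})$, and clearly $\mathrm{supp}(\varphi I_{m}u)\subset\mathrm{supp}\,\varphi\Subset\mathbb{R}$. For $\mathrm{l}[\varphi I_{m}u]\in L^{2}(\mathbb{R},\mathbb{C}^{m})$ one checks the three terms of $i)$ separately: $\varphi I_{m}\mathrm{l}[u]\in L^{2}$ since $\mathrm{l}[u]\in L^{2}$ and $\varphi$ is bounded with compact support; $\varphi''I_{m}u\in L^{2}$ since $u\in L^{2}$; and $\varphi' I_{m}u'\in L^{2}$ because $\varphi'$ has compact support while $u'\in L_{loc}^{2}$, the latter being exactly the content of the inclusion $\mathrm{Dom}(\mathrm{L})\subset H_{loc}^{1}(\mathbb{R},\mathbb{C}^{m})$ from Proposition~\ref{pr_10}. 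The argument for $\varphi I_{m}v\in\mathrm{Dom}(\mathrm{L}_{00}^{+})$ is identical. There is no genuine obstacle here; the only step that is not pure algebra is the $L^{2}$-membership of $\varphi' I_{m}u'$, which is where the regularity assertion $4^{0}$ of Proposition~\ref{pr_10} is needed.
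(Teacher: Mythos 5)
Your computation is correct and is exactly the ``direct calculation'' the paper invokes without writing out: the Leibniz expansion of the quasiderivatives, the identity $u^{[1]}+Qu=u'$, and the use of property $4^{0}$ of Proposition~\ref{pr_10} to get $\varphi'I_{m}u'\in L^{2}$ are all the intended ingredients. Nothing to add.
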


\begin{proof}[Proof of Theorem~\ref{th_MnA}]
\textit{Sufficiency.} 
Due to the assumptions of theorem the minimal operators $\mathrm{L}_{0}$ and $\mathrm{L}_{0}^{+}$ are accretive. 
Without loss of generality we assume that the following inequalities hold:
\begin{equation*}\label{eq_Prf10}
  \mathrm{Re}\,\left\langle \mathrm{L}_{0}u,u\right\rangle _{L^{2}(\mathbb{R},\mathbb{C}^{m})}\geq 
\left\langle u,u\right\rangle _{L^{2}(\mathbb{R},\mathbb{C}^{m})},\qquad u\in \mathrm{Dom}(\mathrm{L}_{0}),
\end{equation*}
and 
\begin{equation}\label{eq_Prf12}
  \mathrm{Re}\,\left\langle \mathrm{L}_{0}^{+}v,v\right\rangle _{L^{2}(\mathbb{R},\mathbb{C}^{m})}\geq 
\left\langle v,v\right\rangle _{L^{2}(\mathbb{R},\mathbb{C}^{m})},\qquad v\in \mathrm{Dom}(\mathrm{L}_{0}^{+}).
\end{equation}

To prove the minimal operator $\mathrm{L}_{0}$ to be $m$-accretive one suffices to show that the kernel of operator 
$\mathrm{L}^{+}$ contains only the zero element.

Let $v$ be a solution to the equation 
\begin{equation*}\label{eq_Prf14}
  \mathrm{L}^{+}v=0.
\end{equation*}
We will show that $v\equiv 0$.

For an arbitrary function $\varphi\in \mathrm{C}_{0}^{\infty}(\mathbb{R},\mathbb{R})$ due to Lemma~\ref{lm_Prf10} 
we have $\varphi I_{m} v\in\mathrm{Dom}(\mathrm{L}_{00}^{+})$. 
Therefore, taking into account that $\mathrm{l}^{+}[v]=0$, after some simple calculations we obtain:
\begin{equation}\label{eq_Prf16}
  \left\langle\mathrm{L}_{0}^{+}\varphi I_{m} v,\varphi I_{m} v\right\rangle_{L^{2}(\mathbb{R},\mathbb{C}^{m})} 
  =\int_{\mathbb{R}}(\varphi')^{2}(v,v)_{\mathbb{C}^{m}}d\,x
  +\int_{\mathbb{R}}\varphi\varphi'\left((v,v')_{\mathbb{C}^{m}}-(v',v)_{\mathbb{C}^{m}}\right) d\,x.
\end{equation}
As 
\begin{equation*}
 \mathrm{Re}\int_{\mathbb{R}}\varphi\varphi'\left((v,v')_{\mathbb{C}^{m}}-(v',v)_{\mathbb{C}^{m}}\right) d\,x=0,
\end{equation*}
from \eqref{eq_Prf16} taking into account \eqref{eq_Prf12} we receive:
\begin{equation}\label{eq_Prf18}
 \int_{\mathbb{R}}(\varphi')^{2}(v,v)_{\mathbb{C}^{m}}d\,x\geq \int_{\mathbb{R}}(\varphi)^{2}(v,v)_{\mathbb{C}^{m}}d\,x
 \qquad \forall\varphi \in \mathrm{C}_{0}^{\infty}(\mathbb{R},\mathbb{R}).
\end{equation} 
Furthermore, let us take a sequence of functions $\{\varphi_{n}\}_{n\in \mathbb{N}}$ which has the following properties:
\begin{itemize}
 \item [i)] $\varphi_{n}\in C_{0}^{\infty}(\mathbb{R},\mathbb{R})$; 
 \item [ii)] $\mathrm{supp}\,\varphi_{n}\subset [-n-1,n+1]$;
 \item [iii)] $\varphi_{n}(x)=1$, $x\in [-n,n]$;
 \item [iv)] $|\varphi_{n}'(x)|\leq C$ where $C>0$ is an absolute constant.
\end{itemize}
Substituting in \eqref{eq_Prf18} we get 
\begin{equation*}
 \int_{-n}^{n}(v,v)_{\mathbb{C}^{m}}d\,x\leq \int_{\mathbb{R}}\varphi_{n}^{2}(v,v)_{\mathbb{C}^{m}}d\,x\leq 
 \int_{\mathbb{R}}(\varphi_{n}')^{2}(v,v)_{\mathbb{C}^{m}}d\,x
 \leq C^{2}\int_{n\leq|x|\leq n+1}\limits(v,v)_{\mathbb{C}^{m}}d\,x,
\end{equation*}
i. e. 
\begin{equation}\label{eq_Prf20}
 \int_{-n}^{n}(v,v)_{\mathbb{C}^{m}}d\,x\leq C^{2}\int_{n\leq|x|\leq n+1}\limits(v,v)_{\mathbb{C}^{m}}d\,x.
\end{equation}
As $v\in L^{2}(\mathbb{R},\mathbb{C}^{m})$ passing in \eqref{eq_Prf20} to the limit as $n\rightarrow\infty$, 
we receive $v\equiv 0$.

Thus we have proved that operator $\mathrm{L}_{0}$ is $m$-accretive.

In a similar way one may prove that operator  $\mathrm{L}_{0}^{+}$  is $m$-accretive. 
Then taking into account that an adjoint operator to the $m$-accretive operator is $m$-accretive  \cite[Proposition~3.20]{Schm2012} from the property~$2^{0}$ of Proposition~\ref{pr_10} 
we get that the maximal operator $\mathrm{L}$ is also $m$-accretive. 
By the definition of the maximal accretivity and \cite[Предложение~3.24]{Schm2012} we have $\mathrm{L}_{0}=\mathrm{L}$ 
as $\mathrm{L}_{0}\subset\mathrm{L}$.
Sufficiency is proved. 

\textit{Necessity.} Let us suppose that the operator $\mathrm{L}_{0}$ is $m$-accretive. Then taking into account that an 
adjoint operator to the $m$-accretive operator is $m$-accretive \cite[Proposition~3.20]{Schm2012} from the property~$2^{0}$ 
of Proposition~\ref{pr_10} we get that the operator $\mathrm{L}_{0}^{+}$ is $m$-accretive. Therefore the operators 
$\mathrm{L}_{00}$ and $\mathrm{L}_{00}^{+}$ are accretive. Necessity is proved.

Theorem is proved completely.
\end{proof}

\begin{proof}[Proof of Corollary~\ref{cr_MnThA_10}]
One only needs to note that in the case of self-adjoint potential $q$ preminimal operators $\mathrm{L}_{00}$ and 
$\mathrm{L}_{00}^{+}$ coincide and due to property $2^{0}$ of Proposition~\ref{pr_10} 
(see also \cite[Theorem~3.1]{Wei1987}) are symmetric.
\end{proof}

\begin{proof}[Proof of Remark~\ref{rm_MnTh10}]
Note that in the case of complex symmetric matrix potentials, we have:
\begin{equation*}
 Q^{*}=\overline{Q}=\{\overline{Q}_{ij}\}_{i,j=1}^{m},\qquad s^{*}=\overline{s}=\{\overline{s}_{ij}\}_{i,j=1}^{m}.
\end{equation*}
Then domains of preminimal operators $\mathrm{L}_{00}$ и $\mathrm{L}_{00}^{+}$ are related by 
\begin{equation*}
 u\in \mathrm{Dom}(\mathrm{L}_{00})\Leftrightarrow \overline{u}\in \mathrm{Dom}(\mathrm{L}_{00}^{+}).
\end{equation*}
Therefore the accretivity of the operator $\mathrm{L}_{00}$ implies the accretivity of the operator $\mathrm{L}_{00}^{+}$ 
and vice versa.

Moreover, let $\mathrm{J}$ be an antilinear  operator of complex conjugation. 
Then one may easy verify that the following inclusion takes place: 
\begin{equation*}
 \mathrm{J}\mathrm{L}_{0}\mathrm{J}=\mathrm{L}_{0}^{+}\subset \mathrm{L}^{+}=\mathrm{L}_{0}^{\ast}, 
\end{equation*}
that is, the operator $\mathrm{L}_{0}$ is $\mathrm{J}$-symmetric~\cite{Glz1966}. 
If operators $\mathrm{L}_{00}$ are accretive, then due to Theorem~\ref{th_MnA} and property~$2^{0}$ 
of Proposition~\ref{pr_10} the operator $\mathrm{L}_{0}$ is $\mathrm{J}$-self-adjoint:
\begin{equation*}
 \mathrm{J}\mathrm{L}_{0}\mathrm{J}=\mathrm{L}_{0}^{\ast}.
\end{equation*}
Therefore its residual spectrum is empty.
\end{proof}

\vspace{25pt}

\textit{Acknowledgment}. The first author was partially supported by the grant no. 03-01-12 of National Academy of Sciences 
of Ukraine (under the joint Ukrainian--Russian project of NAS of Ukraine and  Syberian Branch of Russian Academy of Sciences) 
and the second author was partially supported by the grant no. 01-01-12 of National Academy of Sciences of Ukraine (under the 
joint Ukrainian--Russian project of NAS of Ukraine and Russian Foundation of Basic Research).




\end{document}